\documentclass[11pt]{amsart}
\usepackage[utf8]{inputenc}
\usepackage{graphicx,xcolor,verbatim}
\usepackage{amscd,amsmath,amsfonts,amssymb}
\usepackage{mathtools}
\usepackage[foot]{amsaddr}
\usepackage[pdfborder=000,pdftex=true]{hyperref}

\usepackage{cite}

\newtheorem{theorem}{Theorem}

\newtheorem{remark}[theorem]{Remark}

\numberwithin{equation}{section}

\newcommand{\R}{\mathbb R}
\newcommand{\N}{\mathbb N}
\newcommand{\essinf}{\mathop{\rm essinf}}

\newcommand{\SD}{\Sigma_{\mathcal D}}

\newcommand{\SN}{\Sigma_{\mathcal N}}
\newcommand{\Hs}{H_{\Sigma_{\mathcal{D}}}^s(\Omega)}

\begin{document}
	
\title[Asymptotically linear fractional problems]{Asymptotically linear fractional problems\\ with mixed boundary conditions}

\author{Giovanni Molica Bisci}
\address[G. Molica Bisci]{Department of Human Sciences and Promotion of Quality of Life, San Raffaele University, via di Val Cannuta 247, I-00166 Roma, Italy}
\email{\tt giovanni.molicabisci@uniroma5.it}

\author{Alejandro Ortega}
\address[A. Ortega]{Dpto. de Matem\'aticas Fundamentales, Facultad de Ciencias, UNED, 28040 Madrid, Spain}
\email{\tt alejandro.ortega@mat.uned.es}

\author{Luca Vilasi$^\dagger$}
\address[L. Vilasi]{Department of Mathematical and Computer Sciences, Physical Sciences and Earth Sciences\\
University of Messina\\
Viale F. Stagno d’Alcontres, 31 - 98166 Messina, Italy}
\email{\tt lvilasi@unime.it}

\keywords{Fractional Laplacian, Variational Methods, Pseudo-index theory, Mixed Boundary Data, Asymptotically
linear problem.\\
\phantom{aa} 2020 AMS Subject Classification: Primary: 49J35, 35A15, 58E05; Secondary: 35J61, 35S15.\\
\phantom{aa} $^\dagger$Corresponding author: L. Vilasi.
}

\maketitle

\begin{abstract}
We derive the existence of solutions for an asymptotically linear equation driven by the spectral fractional Laplacian operator with mixed Dirichlet-Neumann boundary conditions. When the nonlinear term $f$ is odd and a suitable relation between the perturbation parameter, the limit of $f(\cdot,t)/t$ as $t\to 0$   and the eigenvalues occurs, we establish also a multiplicity result via the pseudo-index theory related to the genus.
\end{abstract}

\section{Introduction}
In this paper we investigate the existence of solutions to the nonlinear problem
\begin{equation}\label{problem}\tag{$P_{\lambda,\mu}$}
\left\{
\begin{tabular}{lcl}
        $(-\Delta)^s u=\lambda u + \mu f(x,u)$ & &in $\Omega$, \\[3pt]
        $\mkern+21.7muB(u)=0$& &on $\partial\Omega$,
\end{tabular}
\right.
\end{equation}
where $\Omega\subset\R^N$ is a bounded domain with a smooth boundary, $N>2s$, $s\in(1/2,1)$, $\lambda,\mu$ are real parameters, and $(-\Delta)^s$ is the spectral fractional Laplacian on $\Omega$ endowed with mixed Dirichlet-Neumann conditions on $\partial\Omega$,
$$
B(u)\vcentcolon=u\chi_{\SD} +\frac{\partial u}{\partial\nu}\chi_{\SN}.
$$
Here $\nu$ is the outward unit normal to $\partial\Omega$, $\chi_A$ denotes the characteristic function of the set $A\subset\partial\Omega$ and moreover the following hypotheses hold,
\begin{itemize}
\item[$(\Omega_1)$] $\SD$ and $\SN$ are smooth $(N-1)$--dimensional submanifolds of $\partial\Omega$;
\item[$(\Omega_2)$] $\SD$ is a closed manifold with positive measure, namely $|\SD|=\alpha\in(0,|\partial\Omega|)$;
\item[$(\Omega_3)$] $\SD\cap\SN=\emptyset$, $\SD\cup\SN=\partial\Omega$ and $\SD\cap\overline{\Sigma}_\mathcal{N}=\Gamma$, where $\Gamma$ is a smooth $(N-2)$--dimensional submanifold of $\partial\Omega$.
\end{itemize}
The nonlinearity $f:\Omega\times\mathbb{R}\mapsto\mathbb{R}$ is assumed to be a Carathéodory function fulfilling
\begin{itemize}
\item[($f_1$)] $\sup\limits_{|t|\leq \xi}|f(\cdot,t)|\in L^\infty(\Omega)$, for all $\xi>0$;
\item[($f_2$)] $\displaystyle \lim_{|t|\to+\infty}\frac{f(x,t)}{t}=0$ uniformly in a.e. $x\in\Omega$;
\item[($f_3)$] $\displaystyle \lim_{t\to 0}\frac{f(x,t)}{t}=\lambda_0\in\R\setminus\{0\}$ uniformly in a.e. $x\in\Omega$.
\end{itemize}

Problem \eqref{problem} can be seen as a nonlinear perturbation of the eigenvalue problem
\[
	\left\{
	\begin{array}{ll}
		(-\Delta)^s u=\lambda u & \text{ in } \Omega, \smallskip\\
	\mkern+21.5mu B(u)=0 & \text{ on } \partial\Omega,
	\end{array}
	\right.
\]
and has been widely studied in the case of Dirichlet boundary conditions ($\Sigma_{\mathcal D}=\partial\Omega$) under different assumptions on $f$, starting from the local case $s=1$ (in this regard, see for instance \cite{rab2009var} for an overview of the subject). The aim of this paper is to show that,  if $\lambda$ does not belong to the spectrum of $(-\Delta)^s$ (non-resonant regime) and $f$ behaves in an asymptotically linear way near the origin, then it is possible to obtain the existence and multiplicity of solutions for \eqref{problem} by means of several variational techniques.

We prove at first that, by assuming $(f_1)-(f_3)$, the energy functional associated with \eqref{problem} has the geometric and the compactness requirements to apply the classical saddle point theorem (see \cite{Rabinowitz}). 
This provides us with the existence of one weak solution for any $\lambda$ different from the eigenvalues of $(-\Delta)^s$ and any $\mu>0$. Adding a symmetry assumption on the nonlinearity and adopting the pseudo-index theory related to the Krasnoselskii genus (cf. \cite{Benci}), together with an abstract result for critical points of even functionals (cf. \cite{Bartolo1983}), we next show that \eqref{problem} has multiple pairs of non-trivial solutions; this time, for $\lambda$ in a range involving $\lambda_0$ and two eigenvalues of $(-\Delta)^s$ (see assumption $(\Lambda)$ below), and any $\mu>0$. Finally, replacing $(f_2)$ by the more general
\begin{itemize}
	\item[$(f_2')$] there exist $q\in\left(1,2N/(N-2s)\right)$, $a_1,a_2>0$ such that
	$$
	|f(x,t)|\leq a_1 +a_2|t|^{q-1}, \quad\text{for all } (x,t)\in\Omega\times\R,
	$$
\end{itemize}
and $(f_3)$ by the more technical
\begin{itemize}	
	\item[$(f_3')$] there exist a non-empty open set $\Omega_1\subseteq\Omega$ and a set $\Omega_2\subset \Omega_1$, with $|\Omega_2|>0$, such that 
	$$
	\limsup_{t\to 0^+}\frac{\essinf_{x\in \Omega_2}f(x,t)}{t}=+\infty \qquad\text{and}\qquad \liminf_{t\to 0^+}\frac{\essinf_{x\in \Omega_1}f(x,t)}{t}=\lambda_0\in\R, 
	$$
\end{itemize}
we are able to prove that the energy functional has a critical point of different nature, i.e., a local minimum. This solution stems from the application of an abstract result of \cite{ric2000a}, and is obtained for $\lambda$ below the first eigenvalue of $(-\Delta)^s$ and $\mu$ in a precisely estimated range (see Theorem \ref{thmsubcritgrowth} and Remark \ref{estimatemulambda}). The aforementioned abstract tool has recently been used to deal with other fractional-Laplacian-type problems (cf. for instance \cite{molservil2023a,molrepvil2017multiple}). Assumption $(f_3')$ can be seen as a lower asymptotically linear growth of $f$ at $0$; as a prototype of such an $f$, one can take
$$
f(x,t):=a(x)|t|^{r-2} t + b(x)t, \quad \text{for all } (x,t)\in\Omega\times\R,
$$
where $r\in(1,2)$, $a,b\in L^\infty(\Omega)$, $\essinf_{\Omega_2} a >\essinf_{\Omega_1} a=0$, and $\essinf_{\Omega_1}b=\lambda_0$. The results of this paper generalize the ones obtained in \cite{barmol2015a} for integro-differential operators of fractional type and pure Dirichlet conditions to the mixed boundary framework. Results of multiplicity of solutions for \eqref{problem}, with $\lambda$ in the nearly resonant regime, have also been recently obtained in \cite{mov2023,colmolortvil2025} by different variational tools.

Let us denote by $\sigma((-\Delta)^s)=\{\Lambda_k\}_{k\in\N}$ the spectrum of $(-\Delta)^s$ with the mixed Dirichlet-Neumann conditions $B(u)=0$ on $\partial\Omega$, each eigenvalue being repeated according its multiplicity,
\begin{equation*}
0<\Lambda_1<\Lambda_2\leq\ldots\Lambda_k\leq\Lambda_{k+1}\leq\ldots 
\end{equation*}
For $\lambda<\Lambda_1$, let us also set
\begin{equation}\label{mMlambda}
	m_\lambda:=\min\left\lbrace \sqrt{\frac{\Lambda_1-\lambda}{\Lambda_1}},1\right\rbrace, \quad M_\lambda:=\max\left\lbrace \sqrt{\frac{\Lambda_1-\lambda}{\Lambda_1}},1\right\rbrace.
\end{equation}
Finally, let us denote by $\kappa_p$, $p\in[1,2N/(N-2s)]$, the best constant for the embedding $H_{\Sigma_{\mathcal{D}}}^s(\Omega) \hookrightarrow L^p(\Omega)$, see Section \ref{functionalsettings} for details.

Our first result, as anticipated, holds for every $\mu>0$. For simplicity, we will assume therefore that $\mu=1$, being the proof easily adapted to general positive $\mu$'s.

\begin{theorem}\label{mainresult}
Assume $(\Omega_1)-(\Omega_3)$. Then,
\begin{itemize}
	\item[$(i)$] under $(f_1)-(f_3)$, for every $\lambda\notin\sigma((-\Delta)^s)$, $\lambda>\Lambda_1$, problem $(P_{\lambda,1})$ has at least one non-trivial weak solution. If, in addition, $f(x,\cdot)$ is odd for a.e. $x\in\Omega$ and
	\smallskip
	\begin{itemize}
		\item[$(\Lambda)$] $\lambda_0 + \lambda<\Lambda_h\leq\Lambda_l<\lambda$, \; for suitable $h,l\in\mathbb{N}$,\, $l\geq h$,
	\end{itemize}
\smallskip
	 then $(P_{\lambda,1})$  has at least $l-h+1$ distinct pairs of non-trivial weak solutions.
	\item[$(ii)$] If we assume $(f_1)-(f_2)$ and replace $(f_3)$ by $(f_3')$, then for every $\lambda<\Lambda_1$, $(P_{\lambda,1})$ has at least one non-trivial weak solution.
\end{itemize}
\end{theorem}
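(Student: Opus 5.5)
We work in $\Hs$ with the norm induced by the spectral quadratic form, so that $\|u\|^2=\sum_k \Lambda_k |u_k|^2$, where $u_k$ are the coefficients of $u$ in the $L^2$-orthonormal eigenbasis $\{e_k\}$. The energy functional is
\[
J_\lambda(u)=\tfrac12\|u\|^2-\tfrac{\lambda}{2}\|u\|_2^2-\int_\Omega F(x,u)\,dx, \qquad F(x,t)=\int_0^t f(x,\tau)\,d\tau .
\]
By $(f_1)$–$(f_2)$ (or $(f_2')$) we have $|f(x,t)|\le a_1+a_2|t|$ (resp.\ $a_1+a_2|t|^{q-1}$). Together with the compact embedding $\Hs\hookrightarrow L^p$ for $p<2N/(N-2s)$, this gives that $J_\lambda\in C^1$ and that its critical points are weak solutions of $(P_{\lambda,1})$.

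\textbf{Part (i), existence.} Let $\Lambda_k<\lambda<\Lambda_{k+1}$. Split $\Hs=V\oplus W$, where $V=\mathrm{span}\{e_1,\dots,e_k\}$ and $W=V^\perp$.
\begin{itemize}
\item \emph{Palais–Smale condition.} By $(f_2)$, for every $\varepsilon>0$ there is $C_\varepsilon$ with $|F(x,t)|\le \varepsilon t^2+C_\varepsilon$. Let $(u_n)$ be a (PS) sequence. If $\|u_n\|\to\infty$, normalize $v_n=u_n/\|u_n\|$. Since $f(x,u_n)/\|u_n\|\to0$ in $L^2$, the limit $v$ satisfies $(-\Delta)^sv=\lambda v$, so $v=0$ because $\lambda\notin\sigma$. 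Testing $J'_\lambda(u_n)$ against $u_n^V-u_n^W$ gives
\[
\|u_n^V-u_n^W\|\cdot\mathrm{dist}\ \lesssim\ o(\|u_n\|^2),
\]
more precisely $\min\bigl(1-\lambda/\Lambda_{k+1},\,\lambda/\Lambda_k-1\bigr)\|u_n\|^2\le o(\|u_n\|^2)+o(\|u_n\|)$, a contradiction. Hence $(u_n)$ is bounded, and compactness of the embedding yields a convergent subsequence.
\item \emph{Saddle geometry.} On $V$ we have $J_\lambda(v)\le\frac12(1-\lambda/\Lambda_k)\|v\|^2+\varepsilon\|v\|_2^2+C\to-\infty$. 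On $W$ we have $J_\lambda(w)\ge\frac12(1-\lambda/\Lambda_{k+1})\|w\|^2-\varepsilon\|w\|_2^2-C$, which is bounded below.
\end{itemize}
Rabinowitz's saddle point theorem then gives a critical point. For nontriviality, I would use the linking characterization: the critical value $c\ge\inf_W J_\lambda$. Alternatively one can use local linking at $0$, since near $0$ the functional behaves like the quadratic form with $\lambda+\lambda_0$. The cleanest route is to notice that $u=0$ is a solution, and the saddle point itself may be $0$. I would handle this as follows.
\begin{itemize}
\item If $\lambda+\lambda_0\in(\Lambda_j,\Lambda_{j+1})$ with $j\ne k$, then the Morse index of $0$ is $j\neq k$. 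This is incompatible with the critical groups of the saddle point ($C_k\neq0$), which forces a second critical point.
\item If $\lambda+\lambda_0$ lies in the same gap, or is an eigenvalue, I would instead use a small sphere argument.
\end{itemize}
This nontriviality step is the main obstacle. I would try first to compare critical groups at $0$ and at infinity: $C_q(J,\infty)=\delta_{qk}\mathbb Z$, while $C_q(J,0)$ is computed from the linearization with $\lambda+\lambda_0$, since $\lambda_0\ne0$ shifts it.

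\textbf{Part (i), multiplicity.} Under oddness, $J_\lambda$ is even, and I would apply the abstract result of \cite{Bartolo1983} in its pseudo-index form.
\begin{itemize}
\item Take $V_1=\mathrm{span}\{e_1,\dots,e_l\}$. Since $\Lambda_l<\lambda$ and $F(x,t)\ge-\varepsilon t^2-C$, we get $J_\lambda\to-\infty$ on $V_1$, so $\sup_{V_1}J_\lambda$ is finite.
\item Take $W_1=\overline{\mathrm{span}}\{e_h,e_{h+1},\dots\}$. By $(f_3)$, $F(x,t)=\frac{\lambda_0}{2}t^2+o(t^2)$ near $0$, and $(f_2)$ controls large $t$. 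Hence on $W_1$
\[
J_\lambda(w)\ge\tfrac12\Bigl(1-\tfrac{\lambda+\lambda_0}{\Lambda_h}\Bigr)\|w\|^2-o(\|w\|^2)-C\|w\|^{p},
\]
and $1-(\lambda+\lambda_0)/\Lambda_h>0$ by $(\Lambda)$. So $J_\lambda\ge\rho>0=J_\lambda(0)$ on $\partial B_r\cap W_1$ for some small $r$.
\end{itemize}
Since $\dim V_1-\mathrm{codim}\,W_1=l-(h-1)$, the theorem yields $l-h+1$ pairs of critical points with values in $[\rho,\sup_{V_1}J_\lambda]$. These are nontrivial because $\rho>0$, and the (PS) condition is the one proved above.

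\textbf{Part (ii).} Here $\lambda<\Lambda_1$, so $\|u\|_\lambda^2:=\|u\|^2-\lambda\|u\|_2^2$ is an equivalent norm with constants $m_\lambda,M_\lambda$. Write $J_\lambda=\Phi-\Psi$ with $\Phi=\frac12\|\cdot\|_\lambda^2$ and $\Psi(u)=\int F(x,u)$. Then $\Psi$ is sequentially weakly continuous by $(f_2')$ and compactness, and $\Phi$ is coercive and weakly l.s.c.
\begin{itemize}
\item Apply Ricceri's local minimum theorem \cite{ric2000a}: for $r>0$ small, if $\varphi(r)=\inf_{\Phi(u)<r}\frac{\sup_{\Phi\le r}\Psi-\Psi(u)}{r-\Phi(u)}<1$, then $J_\lambda$ has a local minimizer in $\Phi^{-1}(]-\infty,r[)$.
\item Bound $\sup_{\Phi\le r}\Psi\le a_1\kappa_1\sqrt{2r}/m_\lambda+\frac{a_2}{q}\kappa_q^q(2r)^{q/2}/m_\lambda^q$, which is $O(\sqrt r)$. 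Taking $u=0$ in the infimum gives $\varphi(r)=O(r^{-1/2})$, so $\varphi(r)<1$ for $r$ large. Hence a local minimum exists for $\mu$ in a computable range, and for $\mu=1$ one arranges this by choosing $r$ suitably.
\item Nontriviality: use $(f_3')$. Take a cutoff $\phi\in C_c^\infty(\Omega_1)$, $0\le\phi\le1$, with $\phi=1$ on a set containing most of $\Omega_2$. Along a sequence $t_n\to0^+$, the $+\infty$ limsup on $\Omega_2$ dominates both the bounded-below part on $\Omega_1\setminus\Omega_2$ (controlled by $\lambda_0$) and the quadratic term. This gives $J_\lambda(t_n\phi)<0=J_\lambda(0)$ for small $t_n$, so the local minimizer, which lies in the same sublevel ball, is not $0$.
\end{itemize}
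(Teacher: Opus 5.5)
Your (PS) argument (testing $J_\lambda'(u_n)$ against $u_n^W-u_n^V$), the saddle geometry, and the pseudo-index multiplicity argument all follow the paper's route. The multiplicity argument uses the same data as the paper: $S=\partial B_\varrho\cap\mathbb{P}_{h-1}$, $\tilde A=\mathbb{H}_l$, and the dimension--codimension estimate for the genus fed into Theorem~\ref{thmpseudoindex}. Your use of Ricceri's local minimum theorem in $(ii)$ is also the paper's.

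\textbf{The non-triviality step in the first claim of $(i)$ has a genuine gap, and it cannot be closed.} You are right that the saddle point theorem only gives a critical level $c\ge\inf_W J_\lambda$, with $\inf_W J_\lambda\le J_\lambda(0)=0$, so the critical point may be $u=0$. The paper's own proof stops at ``a weak solution'' and never addresses this.

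Your critical-group comparison does produce a second critical point when $\lambda+\lambda_0\notin\sigma((-\Delta)^s)$ lies in a different spectral gap than $\lambda$. This includes the situation under $(\Lambda)$, where the multiplicity part already gives non-trivial solutions. But no ``small sphere argument'' can work in the same-gap case, because non-trivial solutions need not exist there.

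Take $f(x,t)=\lambda_0t/(1+t^2)$ with $0<|\lambda_0|<\operatorname{dist}(\lambda,\sigma((-\Delta)^s))$; it is odd and satisfies $(f_1)$--$(f_3)$. Suppose $u\neq0$ solved $(P_{\lambda,1})$. Then $u$ would be an eigenfunction, with eigenvalue $\lambda$, of $(-\Delta)^s-a$, where $a=\lambda_0/(1+u^2)$ takes values between $0$ and $\lambda_0$. By min--max, the $j$-th eigenvalue of this operator lies within $|\lambda_0|$ of $\Lambda_j$, which contradicts the choice of $\lambda_0$. So in that generality only a (possibly trivial) weak solution can be asserted.

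\textbf{In $(ii)$ your estimate of Ricceri's function is mis-scaled.} The bound $\sup_{\Phi<r}\Psi\le a_1\kappa_1\sqrt{2r}/m_\lambda+\frac{a_2}{q}\kappa_q^q(2r)^{q/2}/m_\lambda^q$ is $O(\sqrt r)$ only as $r\to0^+$. In that regime it makes $\varphi(r)$ blow up, so ``$r$ small'' is the wrong regime. With $q=2$, which is what $(f_2)$ gives, the bound is linear in $r$ for large $r$. Taking $u=0$ then only yields $\varphi(r)\le a_1\kappa_1\sqrt2/(m_\lambda\sqrt r)+a_2\kappa_2^2/m_\lambda^2$, which tends to $a_2\kappa_2^2/m_\lambda^2$, not to $0$.

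Choosing $r$ alone therefore does not give $\varphi(r)<1$. You must use $(f_2)$ to take $a_2$ arbitrarily small, at the cost of a larger $a_1$. This is exactly why the paper's threshold $\mu_\lambda=m_\lambda^2/(\kappa_2^2a_2)$ for $q=2$ exceeds $1$.

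\textbf{A further caveat in $(ii)$.} Your comparison $J_\lambda(t_n\phi)<0$ needs $\limsup_{t\to0^+}\essinf_{\Omega_2}F(x,t)/t^2=+\infty$, whereas $(f_3')$ is stated for $f(x,t)/t$. The paper makes this passage tacitly as well, but it is not automatic. A large $\limsup$ of $f(x,t)/t$ can come from narrow spikes of $f$ that contribute little to $F$. You should either assume the condition on $F$ or supply an argument.
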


Item $(ii)$ in Theorem \ref{mainresult} is a particular case of the following more general result, in which we provide the existence of one non-trivial solution to \eqref{problem}, as well as the explicit estimate of the threshold of $\mu$ for which such a solution exists.

\begin{theorem}\label{thmsubcritgrowth}
Assume $(\Omega_1)-(\Omega_3)$, $(f_1)$, $(f_2')$ and $(f_3')$. Then, for every $\lambda<\Lambda_1$ and for every $\mu\in(0,\mu_\lambda)$, where
\begin{equation}\label{defmulambda}
\mu_\lambda:=q\sup_{t>0}\frac{t}{q\kappa_1a_1 \frac{\sqrt{2}}{m_\lambda} +\kappa_q^qa_2\left(\frac{\sqrt{2}}{m_\lambda}\right)^q t^{q-1}},
\end{equation}
\eqref{problem} has at least one non-trivial weak solution.
\end{theorem}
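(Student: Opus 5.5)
We plan to apply Ricceri's local minimum principle from \cite{ric2000a}, in the following form. Let $X$ be a reflexive Banach space and $\Phi,\Psi:X\to\R$ be functionals such that $\Phi$ is coercive and sequentially weakly lower semicontinuous and $\Psi$ is sequentially weakly upper semicontinuous. Set
$$
\varphi(r):=\inf_{u\in\Phi^{-1}((-\infty,r))}\frac{\sup_{v\in\Phi^{-1}((-\infty,r))}\Psi(v)-\Psi(u)}{r-\Phi(u)}.
$$
Then for every $r>\inf\Phi$ and every $\mu\in(0,1/\varphi(r))$, the functional $\Phi-\mu\Psi$ has a local (indeed global on $\Phi^{-1}((-\infty,r))$) minimum lying in $\Phi^{-1}((-\infty,r))$.

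We take $X=\Hs$ and
$$
\Phi(u)=\tfrac12\Big(\|u\|^2-\lambda\|u\|_2^2\Big),\qquad \Psi(u)=\int_\Omega F(x,u)\,dx,
$$
where $F(x,t)=\int_0^t f(x,\tau)\,d\tau$. The key preliminary step is the two-sided bound
$$
\tfrac{m_\lambda^2}{2}\|u\|^2\le\Phi(u)\le\tfrac{M_\lambda^2}{2}\|u\|^2 .
$$
It follows from the variational characterisation $\|u\|_2^2\le\Lambda_1^{-1}\|u\|^2$, treating separately the cases $\lambda\ge0$ and $\lambda<0$; this is exactly why $m_\lambda,M_\lambda$ appear in \eqref{mMlambda}. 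Hence $\Phi$ is coercive for $\lambda<\Lambda_1$. It is also weakly l.s.c., since the norm part is convex and continuous while the $L^2$ part is weakly continuous by compactness of the embedding. Under $(f_2')$, $\Psi$ is $C^1$ with compact derivative, hence sequentially weakly continuous, because $q<2N/(N-2s)$ and the embedding into $L^q$ is compact. Critical points of $\Phi-\mu\Psi$ are weak solutions of \eqref{problem}.

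Next we estimate $\varphi(r)$ by taking $u=0$:
$$
\varphi(r)\le\frac{1}{r}\sup_{\Phi(v)<r}\Psi(v).
$$
If $\Phi(v)<r$, then $\|v\|<\sqrt{2r}/m_\lambda$. By $(f_2')$ and the embedding constants,
$$
\Psi(v)\le a_1\kappa_1\|v\|+\tfrac{a_2}{q}\kappa_q^q\|v\|^q .
$$
Setting $t=\sqrt r$, this gives
$$
\frac{1}{\varphi(r)}\ge\frac{q\,t}{q\kappa_1a_1\frac{\sqrt2}{m_\lambda}+\kappa_q^qa_2\big(\frac{\sqrt2}{m_\lambda}\big)^qt^{q-1}} .
$$
Taking the supremum over $t>0$ recovers $\mu_\lambda$ in \eqref{defmulambda}. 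Thus for any $\mu<\mu_\lambda$ we can choose $r$ with $\mu<1/\varphi(r)$, and obtain a local minimum $u_\mu$ with $\Phi(u_\mu)<r$, which is a weak solution.

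The main obstacle is non-triviality: $u=0$ is a critical point, so we must show that $u_\mu\neq0$. Since $u_\mu$ minimises $\Phi-\mu\Psi$ on the sublevel $\Phi^{-1}((-\infty,r))$, it suffices to find a $w$ in that sublevel with $\Phi(w)-\mu\Psi(w)<0$. We do this with $(f_3')$.
\begin{itemize}
\item Fix a smooth cutoff $w\in C_c^\infty(\Omega_1)$ with $0\le w\le1$ and $w=1$ on a compact $K\subset\Omega_1$ satisfying $|\Omega_2\setminus K|$ small; this uses inner regularity of $\Omega_2$, possibly after replacing $\Omega_2$ by a subset of positive measure. Such $w$ lies in $\Hs$ since its support avoids $\partial\Omega$.
\item Using $\limsup$ in $(f_3')$, pick $\tau_n\to0^+$ and $M$ arbitrarily large with $F(x,\tau_n w)\ge \tfrac M2\tau_n^2$ on $\Omega_2\cap K$.
\item Using $\liminf$, we have $F(x,t)\ge\tfrac{\lambda_0-\varepsilon}{2}t^2$ for small $t\ge0$ on $\Omega_1$, and this controls the remaining region.
\end{itemize}
Then
$$
\mu\Psi(\tau_nw)\ge\tfrac{\mu\tau_n^2}{2}\Big(M|\Omega_2\cap K|-C\Big),
$$
while $\Phi(\tau_nw)\le C'\tau_n^2$. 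Since $M$ is arbitrary, $\Phi(\tau_nw)-\mu\Psi(\tau_nw)<0$ for $n$ large, and $\tau_nw$ lies in the sublevel for $n$ large. Hence $u_\mu\ne0$. The delicate point is handling the sign of $F$ on $\Omega_1\setminus\Omega_2$ when $\lambda_0<0$; the bound from the $\liminf$ gives a uniform quadratic lower bound there, which suffices.

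Finally, Theorem \ref{mainresult}$(ii)$ follows as the special case $\mu=1$. Under $(f_2)$ one has $(f_2')$ with $a_2$ as small as desired, at the cost of a larger $a_1$ and with $q$ close to $1$. Choosing $t$ large in \eqref{defmulambda} then makes $\mu_\lambda>1$; this scaling argument is the expected content of Remark \ref{estimatemulambda}.
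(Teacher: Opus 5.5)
Your proposal is correct and follows the paper's proof essentially step by step. You use Ricceri's Theorem 2.1 with $\Phi_\lambda=\frac12\|\cdot\|_{H_{\SD},\lambda}^2$ and $\Psi(u)=\int_\Omega F(x,u)\,dx$, bound $\varphi(r)$ by testing with $u=0$ (which gives exactly $\mu_\lambda$ after setting $r=t^2$), and prove non-triviality with $\tau_n w$, where $w$ is a cutoff supported in $\Omega_1$ and equal to $1$ on a positive-measure subset of $\Omega_2$. Your version is slightly cleaner than the paper's: you construct $w$ explicitly, keep the correct factor $\frac12$ in $F(x,t)\geq\frac{\lambda_0-\varepsilon}{2}t^2$, and choose $\varepsilon$ independently of $M$.

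Two small caveats. First, both you and the paper take for granted the step from $\limsup_{t\to0^+}\essinf_{\Omega_2}f(x,t)/t=+\infty$ to $\essinf_{\Omega_2}F(x,\tau_n)\geq\frac M2\tau_n^2$. This does not follow in general, because narrow spikes of $f(x,t)/t$ need not make $F(x,t)/t^2$ large, so that hypothesis should really be read for $F(x,t)/t^2$. Second, in your closing remark on Theorem~\ref{mainresult}$(ii)$, $(f_2)$ gives $(f_2')$ with $q=2$ and $a_2$ arbitrarily small, not with $q$ close to $1$; this still yields $\mu_\lambda=m_\lambda^2/(\kappa_2^2a_2)>1$, so your conclusion stands.
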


We point out that if $q\in(1,2)$ in assumption $(f_2')$, then $\mu_\lambda$ in \eqref{defmulambda} takes the value $+\infty$, in accordance with Theorem \ref{mainresult} -- $(ii)$, which holds for $\mu>0$, see Remark \ref{estimatemulambda}.

\section{Functional framework and pseudo-index theory}\label{functionalsettings}
Let $\{(\lambda_k,\varphi_k)\}$ be the eigenvalues and the eigenfunctions (normalized in the $L^2(\Omega)$-norm), respectively, of $-\Delta$ endowed with homogeneous mixed Dirichlet-Neumann conditions on $\partial\Omega$.  Then $\{(\lambda_k^s,\varphi_k)\}$ are the eigenvalues and the eigenfunctions of $(-\Delta)^s$, respectively (therefore, $\Lambda_k=\lambda_k^s$ for every $k\geq 1$).
As a result, given two smooth functions 
$$
u_i(x)=\sum_{k\geq1}\langle u_i,\varphi_k\rangle_{2}\varphi_k(x), \quad i=1,2,
$$
where $\langle u,v\rangle_2:=\int_{\Omega}uv\,dx$ is the standard scalar product on $L^2(\Omega)$, one has
\begin{equation*}
\langle(-\Delta)^s u_1, u_2\rangle_{2} = \sum_{k\ge 1} \lambda_k^s\langle u_1,\varphi_k\rangle_{2} \langle u_2,\varphi_k\rangle_{2},
\end{equation*}
i.e., the action of the spectral fractional Laplacian on $u_1$ is given by
\begin{equation*}
(-\Delta)^su_1=\sum_{k\ge 1} \lambda_k^s\langle u_1,\varphi_k\rangle_{2}\varphi_k.
\end{equation*}
The operator $(-\Delta)^s$ is then well-defined on the Hilbert space
\begin{equation*}
H_{\Sigma_{\mathcal{D}}}^s(\Omega)\vcentcolon=\left\{u=\sum_{k\ge 1} a_k\varphi_k\in L^2(\Omega):\ u=0\ \text{on }\Sigma_{\mathcal{D}},\ \|u\|_{H_{\Sigma_{\mathcal{D}}}^s}^2\vcentcolon=
\sum_{k\ge 1} a_k^2\lambda_k^s<+\infty\right\}.
\end{equation*}

By \cite[Theorem 11.1]{Lions1972}, if $s\in(0,1/2]$, then $H_0^s(\Omega)=H^s(\Omega)$ (and thus, also
$H_{\Sigma_{\mathcal{D}}}^s(\Omega)=H^s(\Omega)$), while, if $s\in(1/2,1)$, then $H_0^s(\Omega)\subsetneq H^s(\Omega)$. Hence, the range $s\in(1/2,1)$ ensures $H_{\Sigma_{\mathcal{D}}}^s(\Omega)\subsetneq H^s(\Omega)$ and it provides us with the appropriate functional space for problem \eqref{problem}.

The embedding $H_{\Sigma_{\mathcal{D}}}^{s}(\Omega)  \hookrightarrow L^{p}(\Omega)$ is continuous for all $p\in[1,2_s^*]$, where $2^*_s:=2N/(N-2s)$, and compact for $p\in[1,2_s^*)$. We set
$$
\kappa_p:=\sup_{u\in H_{\Sigma_{\mathcal{D}}}^{s}(\Omega)\setminus\{0\}}\frac{\left\|u\right\|_p }{\|u\|_{H_{\Sigma_{\mathcal{D}}}^s}}
$$
(here and in what follows, $\left\|\cdot \right\|_p$, $p\in[1,+\infty]$, stands for the standard $L^p$-norm on $\Omega$). If $p=2_s^*$, the Sobolev constant related to $\Sigma_{\mathcal{D}}$ is defined by
\begin{equation*}
	\widetilde{S}(\Sigma_{\mathcal{D}})=\inf_{u\in
			H_{\Sigma_{\mathcal{D}}}^s(\Omega)\setminus\{0\}}\frac{\|u\|_{H_{\Sigma_{\mathcal{D}}}^s}^2}{\|u\|_{2_s^*}^2}.
\end{equation*}
 Since $\alpha\!\in\!(0,|\partial\Omega|)$ and $H_{0}^s(\Omega)\subsetneq H_{\Sigma_{\mathcal{D}}}^s(\Omega)$, then
$0\!<\widetilde{S}(\Sigma_{\mathcal{D}})\!<S(N,s)$, being $S(N,s)$ the best constant of the embedding $H_0^s(\Omega)\hookrightarrow L^{2^*_s}(\Omega)$.
Indeed, $\widetilde{S}(\Sigma_{\mathcal{D}})\leq 2^{-\frac{2s}{N}}S(N,s)$ (cf. \cite[Proposition 3.6]{colort2019}) and, if $\widetilde{S}(\Sigma_{\mathcal{D}})<2^{-\frac{2s}{N}}S(N,s)$, then $\widetilde{S}(\Sigma_{\mathcal{D}})$ is attained (cf. \cite[Theorem 2.9]{colort2019}).

For every $k\geq 1$, let us denote
$$
\mathbb{H}_k\vcentcolon=\text{span}\{\varphi_1,\ldots,\varphi_k\}
\quad\text{and}\quad
\mathbb{P}_{k}\vcentcolon=\{u\in H_{\SD}^s(\Omega): \left\langle u,\varphi_j\right\rangle_{H_{\SD}^s}=0,\ \forall j=1,\ldots,k\},
$$
where $\left\langle u,v\right\rangle_{H_{\SD}^s}=\left\langle (-\Delta)^{s/2}u,(-\Delta)^{s/2}v\right\rangle_2$ is the scalar product on $H_{\SD}^s(\Omega)$ inducing $\|\cdot\|_{H_{\Sigma_{\mathcal{D}}}^s}$. The following variational characterization of $\Lambda_k$  will be much used in our arguments (cf. \cite[Lemma 6]{mov2023}):
\begin{equation}\label{varcharacteigen}
\Lambda_k=\inf_{u\in \mathbb{P}_{k-1}}\frac{\left\|u\right\|_{H_{\SD}^s}^2}{\left\| u\right\|_{2}^2}=\sup_{u\in \mathbb{H}_{k}}\frac{\left\|u\right\|_{H_{\SD}^s}^2}{\left\| u\right\|_{2}^2}, \quad \text{for any } k\in\mathbb{N}.
\end{equation}
We also recall that, as a consequence of the inclusions $H_{0}^1(\Omega)\subset H_{\Sigma_{\mathcal{D}}}^1(\Omega)\subset H^1(\Omega)$, one has
$\lambda_{k}^N\leq \lambda_{k} \leq \lambda_{k}^{D}\leq\lambda_{k+1}^{N}$,
being $\lambda_{k}^{N}$ and $\lambda_{k}^{D}$ the $k$-th eigenvalue of $-\Delta$ endowed with homogeneous Neumann and Dirichlet boundary conditions, respectively. As a consequence,
$\Lambda_k\to+\infty$ as $k\to+\infty$.

We say that $u\in H_{\SD}^s(\Omega)$ is a weak solution to \eqref{problem} if
\begin{equation*}
\int_\Omega (-\Delta)^{s/2}u (-\Delta)^{s/2} v dx =\lambda\int_\Omega uv dx + \mu\int_\Omega f(x,u)vdx,
\end{equation*}
for all $v\in H_{\SD}^s(\Omega)$. The energy functional $I_{\lambda,\mu}:H_{\Sigma_{\mathcal{D}}}^s(\Omega)\to\R$ associated with \eqref{problem} is given by
\begin{equation*}
I_{\lambda,\mu}(u)\vcentcolon=\frac12\int_{\Omega}|(-\Delta)^{\frac{s}{2}}u|^{2}dx-\frac{\lambda}{2}\int_{\Omega}u^2dx-\mu\int_{\Omega}F(x,u)dx,
\end{equation*}
where $F(x,t):=\int_0^t f(x,\xi)d\xi$ for every $(x,t)\in\Omega\times\R$.

We finally recall the basic ingredients of the pseudo-index theory for even functionals with symmetry group $\mathbb{Z}_2=\{\text{id}, -\text{id}\}$, used in our approach. Let us define
$$
\Sigma\vcentcolon=\{A\subseteq H_{\Sigma_{\mathcal{D}}}^s(\Omega): A\text{ is closed and } -u\in A\text{ if }u\in A\},
$$
and
$$
\mathcal{G}:=\{g\in C^0\left(  H_{\Sigma_{\mathcal{D}}}^s(\Omega),H_{\Sigma_{\mathcal{D}}}^s(\Omega)\right) : g\text{ is odd}\}.
$$
For $A\in\Sigma$, $A\neq \emptyset$, its Krasnoselskii genus is defined as
$$
\gamma(A):=\inf\{n\in\N: \exists\psi\in C^0(A, \mathbb{R}^n\setminus\{0\})\text{ s.t. }\psi(-u)=-\psi(u)\text{ for all } u\in A\}
$$
(we set $\gamma(A)=+\infty$ if such an infimum does not exist and $\gamma(\emptyset)=0$). The index theory  $(\Sigma,\mathcal{G},\gamma)$ related to $\mathbb{Z}_2$ is also called genus. Following \cite{Benci}, we define the pseudo-index related to the genus and $S\in\Sigma$ as the triplet $(S,\mathcal{G}^*,\gamma^*)$, where $\mathcal{G}^*$ is a group of odd homeomorphisms and $\gamma^*:\Sigma\to\mathbb{N}\cup\{+\infty\}$ is the map
$$
\gamma^*(A):= \min\limits_{g\in\mathcal{G}^*}\gamma(g(A)\cap S),\quad\text{for all }A\in \Sigma.
$$
The following critical point theorem plays a key role for deducing multiple solutions to \eqref{problem} ({cf. \cite[Theorem 2.9]{Bartolo1983}).
	
\begin{theorem}\label{thmpseudoindex}
Let $a,b,c_0,c_\infty\in\overline{\R}$, with $-\infty\leq a<c_0<c_\infty<b\leq +\infty$, let $\Phi:H_{\Sigma_{\mathcal{D}}}^s(\Omega)\to\R$ be an even functional, $(\Sigma,\mathcal{G},\gamma)$ the genus theory on $H_{\Sigma_{\mathcal{D}}}^s(\Omega)$, $S\in\Sigma$, $(S,\mathcal{G}^*,\gamma^*)$ the pseudo-index related to the genus and $S$, with
\[\mathcal{G}^*\vcentcolon=\{g\in\mathcal{G}: g \text{ is a bounded homeomorphism and } g(u)=u\text{ if }u\notin\Phi^{-1}((a,b))\}.\]
Assume that
\begin{itemize}
\item[$(i)$]  $\Phi$ satisfies the Palais-Smale condition $(PS)_c$ for all $c\in(a,b)$, i.e., any sequence $\{u_j\}\subset H_{\Sigma_{\mathcal{D}}}^s(\Omega)$ satisfying $\Phi(u_j)\to c$ and $\Phi'(u_j)\to 0$ as $j\to +\infty$, has a convergent subsequence;
\item[$(ii)$] $S\subseteq \Phi^{-1}([c_0,+\infty))$;
\item[$(iii)$] there exists $\tilde{k}\in\mathbb{N}$ and $\tilde{A}\in\Sigma$ such that $\tilde{A}\subseteq \{u\in H_{\Sigma_{\mathcal{D}}}^s(\Omega):\Phi(u)\leq c_\infty\}$ and $\gamma^*(\tilde{A})\geq \tilde{k}$.
\end{itemize}
Then, for all $i\in\{1,2,\ldots,\tilde{k}\}$, setting $\Sigma_i^*:=\{A\in\Sigma:\gamma^*(A)\geq i\}$, the numbers
\begin{equation}\label{charactci}
c_i=\inf\limits_{A\in \Sigma_i^*}\sup\limits_{u\in A}\Phi(u)
\end{equation}
are critical values for $\Phi$ and one has $c_0\leq c_1\leq\ldots\leq c_{\tilde{k}}\leq c_\infty$. Furthermore, if $c=c_i=\ldots=c_{i+r}$ with $i\geq 1$ and $i+r\leq \tilde{k}$, then 
$\gamma\left( \left\lbrace u\in H_{\Sigma_{\mathcal{D}}}^s(\Omega): \Phi(u)=c,\ \Phi'(u)=0\right\rbrace \right) \geq r+1$.
\end{theorem}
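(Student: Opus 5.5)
The plan is the standard minimax argument for pseudo-indices. First I would establish the basic properties of $\gamma^*$, then the ordering of the levels $c_i$, and finally that each $c_i$ is critical together with the genus estimate. Implicitly $\Phi\in C^1$.

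\emph{Properties of $\gamma^*$.} For $A,B\in\Sigma$:
\begin{itemize}
\item[(a)] monotonicity: if $A\subseteq B$ then $\gamma^*(A)\le\gamma^*(B)$;
\item[(b)] invariance: $\gamma^*(h(A))\ge\gamma^*(A)$ for every $h\in\mathcal G^*$;
\item[(c)] subadditivity: $\gamma^*(A)\le\gamma^*(\overline{A\setminus N})+\gamma(N)$ for every $N\in\Sigma$.
\end{itemize}
Property (a) follows from monotonicity of the genus. For (b), $\gamma(g(h(A))\cap S)$ is taken over $g\circ h\in\mathcal G^*$, since $\mathcal G^*$ is a group, so the minimum can only increase. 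For (c), use $g(A)\cap S\subseteq (g(\overline{A\setminus N})\cap S)\cup g(N)$, subadditivity of $\gamma$, and $\gamma(g(N))=\gamma(N)$ because $g$ is an odd homeomorphism. Note also that $g(A)$ is closed: $g$ is a homeomorphism of the whole space, so it is a closed map.

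\emph{Ordering of the levels.} Since $\Sigma^*_{i+1}\subseteq\Sigma^*_i$, we get $c_i\le c_{i+1}$. Because $\tilde A\in\Sigma^*_{\tilde k}\subseteq\Sigma^*_i$ and $\tilde A\subseteq\Phi^{c_\infty}$, we get $c_i\le c_\infty$. If $A\in\Sigma^*_1$, then $\gamma(\mathrm{id}(A)\cap S)\ge1$ because $\mathrm{id}\in\mathcal G^*$. Hence $A\cap S\neq\emptyset$, and by (ii) $\sup_A\Phi\ge c_0$, so $c_1\ge c_0$. Thus every $c_i$ lies in $[c_0,c_\infty]\subset(a,b)$, and $(PS)_{c_i}$ holds.

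\emph{Criticality and multiplicity.} Fix $c=c_i=\dots=c_{i+r}$. By $(PS)_c$ the critical set $K_c$ is compact and symmetric. Suppose, for contradiction, that $\gamma(K_c)\le r$; this also covers $K_c=\emptyset$, which gives criticality. Then there is a closed symmetric $\delta$-neighbourhood $N$ of $K_c$ with $\gamma(N)=\gamma(K_c)\le r$. Next I would apply the equivariant deformation lemma, which uses $(PS)_c$, evenness of $\Phi$, and the pseudo-gradient flow cut off near the level $c$. Choose $\bar\varepsilon>0$ with $[c-\bar\varepsilon,c+\bar\varepsilon]\subset(a,b)$. The lemma gives $\varepsilon\in(0,\bar\varepsilon)$ and an odd homeomorphism $\eta$ with three properties: $\eta$ is bounded, i.e.\ it moves points a bounded distance; $\eta(u)=u$ whenever $|\Phi(u)-c|\ge\bar\varepsilon$; and $\eta(\Phi^{c+\varepsilon}\setminus N)\subseteq\Phi^{c-\varepsilon}$. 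In particular $\eta$ is the identity outside $\Phi^{-1}((a,b))$, so $\eta\in\mathcal G^*$.

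Now pick $A\in\Sigma^*_{i+r}$ with $\sup_A\Phi\le c+\varepsilon$. By (c), $\gamma^*(\overline{A\setminus N})\ge i+r-r=i$. By (b), $\gamma^*(\eta(\overline{A\setminus N}))\ge i$, while $\sup_{\eta(\overline{A\setminus N})}\Phi\le c-\varepsilon$. This contradicts the definition of $c_i=c$. Hence $\gamma(K_c)\ge r+1$. The case $r=0$ gives in particular that each $c_i$ is a critical value.

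\emph{Main obstacle.} The delicate step is the deformation lemma in the exact form needed here. The deformation must be odd, a homeomorphism of the whole space, bounded, and equal to the identity outside the strip $\Phi^{-1}((c-\bar\varepsilon,c+\bar\varepsilon))$, all using only $(PS)_c$ rather than a global (PS). I would handle this by taking an odd locally Lipschitz pseudo-gradient field, obtained by symmetrising $W(u)$ to $\tfrac12(W(u)-W(-u))$. I would multiply it by a cut-off that vanishes outside the strip and near $K_c$, and normalise it so that the field is bounded. The time-one flow of this field is then a bounded odd homeomorphism, and the $(PS)_c$ lower bound on $\|\Phi'\|$ away from $N$ gives the required decrease of $\Phi$.
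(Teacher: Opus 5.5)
Your proof is correct. The paper does not prove this theorem itself; it quotes \cite[Theorem 2.9]{Bartolo1983}, which is proved along the same lines as yours: monotonicity, $\mathcal G^*$-invariance and subadditivity of $\gamma^*$, followed by an odd deformation lying in $\mathcal G^*$ (there under a Cerami-type condition, of which $(PS)_c$ for $c\in(a,b)$ is a special case).

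One small fix is needed. Because your $N$ is closed, $\overline{A\setminus N}$ can meet $\partial N$. So either apply the deformation lemma to the open neighbourhood $\operatorname{int}N$ of $K_c$, or take $N$ open and use $\gamma(\overline N)=\gamma(K_c)$. Either way you then get $\eta(\overline{A\setminus N})\subseteq\{u:\Phi(u)\le c-\varepsilon\}$, as the argument requires.
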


Problems with asymptotically linear data have also been investigated under similar assumptions in other works; see, for instance, the papers \cite{ABMB,BMB,BDM} and references therein. Reference \cite{MBRS}, on the other hand, should be regarded as a primary source for fractional problems.


\section{Proof of Theorems \ref{mainresult} and \ref{thmsubcritgrowth}} 
\begin{proof}[Proof of Theorem \ref{mainresult}]
Let us first address the existence part in $(i)$. By $(f_1)$ and $(f_2)$ there exists $C_1>0$ such that
\[
|F(x,t)|\leq C_1(1+t^2),\quad\text{ for all } t\in\R \text{ and for a.e.}\ x\in\Omega,
\]
and by \eqref{varcharacteigen}, taking $k\in\N$ large enough,
\begin{equation*}
I_{\lambda,1}(u)\geq\frac{1}{2}\left(1-\frac{\lambda+C_2}{\Lambda_{k+1}}\right)\left\|u\right\|_{H_{\SD}^s}^2-C_3\geq C_4,
\end{equation*}
for all $u\in\mathbb{P}_{k}$ and for suitable $C_2,C_3>0$ and $C_4\in\R$. In addition, by using again $(f_1)-(f_2)$, fixed $\varepsilon>0$ there exists $C_\varepsilon>0$ such that
$$
I_{\lambda,1}(u)\leq\frac{1}{2}\left\|u\right\|_{H_{\SD}^s}^2-\frac{\lambda}{2}\left\|u\right\|_2^2+\frac{\varepsilon}{2}\left\|u\right\|_2^2 +C_\varepsilon \left\|u\right\|_2,\quad\text{for all}\ u\in H_{\SD}^s(\Omega).
$$
Taking $\Lambda_k<\lambda$ 
and $\varepsilon>0$ such that $\Lambda_k+\varepsilon<\lambda$, we then obtain
\begin{equation}\label{post}
I_{\lambda,1}(u)\leq\frac{1}{2}\left(\Lambda_k+\varepsilon-\lambda\right)\left\|u\right\|_2^2+C_\varepsilon \left\|u\right\|_2,\quad\text{for all}\ u\in\mathbb{H}_{k},
\end{equation}
and  being $\mathbb{H}_{k}$ finite-dimensional, 
there exist $r,C_5>0$ such that
\begin{equation*}
I_{\lambda,1}(u)\leq-C_5, \quad\text{ for all } u\in\mathbb{H}_{k} \text{ with } \left\| u\right\|_{\Hs}=r.
\end{equation*}
Then, by the saddle point theorem (cf. \cite[Theorem 4.6]{Rabinowitz}), $(P_{\lambda,1})$ has at least a weak solution.

Next, let us focus on the multiplicity of solutions in $(i)$. By $(f_2)-(f_3)$ we obtain
$$
\lim\limits_{|t|\to+\infty}\frac{F(x,t)}{t^2}=0\quad\text{ and }\quad \lim\limits_{t\to 0}\frac{F(x,t)}{t^2}=\frac{\lambda_0}{2},
$$
uniformly in a.e. $x\in\Omega$. Let us notice that condition $(\Lambda)$ forces $\lambda_0<0$. Thus, for every $\varepsilon>0$ there exists $r_\varepsilon\geq 1$ and $\delta_\varepsilon>0$ such that, for a.e. $x\in\Omega$, one has
\begin{equation}\label{F_1}
|F(x,t)|\leq \frac{\varepsilon}{2}t^2,\quad\text{if }|t|>r_\varepsilon, \qquad\text{and}\qquad
\left| F(x,t)-\frac{\lambda_0}{2}t^2\right| \leq \frac{\varepsilon}{2}t^2,\quad\text{if }|t|<\delta_\varepsilon.
\end{equation}
Moreover, by $(f_1)$, fixed $m\in[0,2_s^*-2)$, there exists $k_{\varepsilon}>0$ such that
\begin{equation}\label{F_3}
|F(x,t)|\leq k_{\varepsilon} |t|^{m+2},\quad\text{ for a.e. } x\in\Omega \text{ and for }\delta_\varepsilon\leq |t|\leq r_\varepsilon.
\end{equation}

Gathering together \eqref{F_1}-\eqref{F_3} it follows that
$$
F(x,t)\leq\frac{\lambda_0+\varepsilon}{2}t^2 + k_\varepsilon|t|^{m+2} \quad \text{for a.e. } x\in\Omega \text{ and for all } t\in\R.
$$
Thus, choosing $\varepsilon\in\left( \max\{0,-\lambda-\lambda_0\},\Lambda_h-\lambda-\lambda_0\right)$, we obtain
\begin{align*}
I_{\lambda,1}(u)&\geq\frac{1}{2}\left\| u\right\|_{H_{\SD}^s}^2-\frac{\lambda + \lambda_0+\varepsilon}{2}\left\| u\right\|_2^2-C_\varepsilon\left\| u\right\|_{H_{\SD}^s}^{m+2}\\
&\geq\frac{1}{2}\left(1-\frac{\lambda+\lambda_0+\varepsilon}{\Lambda_{h}}\right)\left\| u\right\|_{H_{\SD}^s}^2-C_\varepsilon\left\| u\right\|_{H_{\SD}^s}^{m+2},
\end{align*}
for all $u\in \mathbb{P}_{h-1}$ and for some $C_\varepsilon>0$,
and, as a result, it is possible to determine a sufficiently small $\varrho>0$ and $c_0>0$ such that
 \begin{equation}\label{energyonrhosphere}
I_{\lambda,1}(u)\geq 
c_0, \quad \text{for all } u\in \mathbb{P}_{h-1} \text{ with } \left\| u\right\|_{\Hs}=\varrho.
\end{equation}
If now 
we choose  $\varepsilon>0$ such that $\Lambda_l+\varepsilon<\lambda$, by \eqref{post}
there exists $c_\infty=c_\infty(\varepsilon)>c_0$ such that
\begin{equation}\label{relaz2}
I_{\lambda,1}(u)\leq c_\infty,\quad\text{for all } u\in \mathbb{H}_{l}.
\end{equation}

Following the same arguments as \cite{barmol2015a}, it is easily seen that, under the assumptions $(f_1)-(f_2)$ and that $\lambda\notin\sigma((-\Delta)^s)$, $I_{\lambda,1}$ satisfies $(PS)_c$ at any level $c\in\mathbb\R$ . Also, the evenness of $F$ implies the one of $I_{\lambda,1}$. Then, if $\varrho,c_0,c_\infty$ are as in \eqref{energyonrhosphere} and \eqref{relaz2}, setting $\partial B_\varrho:=\left\lbrace u\in H_{\Sigma_{\mathcal{D}}}^{s}(\Omega): \left\| u\right\|_{\Hs}=\varrho \right\rbrace$, and considering the pseudo-index theory $(\partial B_\varrho\cap\mathbb{P}_{h-1},\mathcal{G}^*,\gamma^*)$ related to the genus, $\partial B_\varrho\cap\mathbb{P}_{h-1}$ and $I_{\lambda,1}$, 
we obtain the following estimate,
$$
\gamma(\mathbb{H}_l\cap g(\partial B_\varrho\cap\mathbb{P}_{h-1})) \geq \operatorname{dim} \mathbb{H}_l -\operatorname{codim} \mathbb{P}_{h-1}, \quad \text{for every } g\in\mathcal{G}^*,
$$
that is,
$$
\gamma^*(\mathbb{H}_l) \geq l-h+1.
$$
Here we used the fact that, due to \cite[Theorem A.2]{Bartolo1983}, given the genus theory $(\Sigma,\mathcal G,\gamma)$ on $H_{\Sigma_{\mathcal{D}}}^{s}(\Omega)$
and two closed subspaces $H_1,H_2\subset H_{\Sigma_{\mathcal{D}}}^{s}(\Omega)$ with $\max\{\operatorname{dim}H_1,\operatorname{codim}H_2\}<+\infty$, then
$$
\gamma(H_1\cap g(\partial B\cap H_2))\geq \operatorname{dim}H_1 -\operatorname{codim}H_2
$$
for every bounded $g\in\mathcal G$ and every open bounded symmetric neighbourhood $B$ of 0 in $H_{\Sigma_{\mathcal{D}}}^{s}(\Omega)$. Thus, by Theorem \ref{thmpseudoindex}, where in particular $\tilde A=\mathbb{H}_l$ and $S=\partial B_\varrho\cap\mathbb{P}_{h-1}$, $I_{\lambda,1}$ has at least $l-h+1$ distinct pairs of critical points corresponding to at most $l-h+1$ distinct critical values $c_i$, the latter characterized by \eqref{charactci}. This concludes the proof of $(i)$.

The proof of $(ii)$ is a consequence of the one of Theorem \ref{thmsubcritgrowth}, since $(f_2)$ implies $(f_2')$.
\end{proof}

\medskip

\begin{proof}[Proof of Theorem \ref{thmsubcritgrowth}]
If $\lambda<\Lambda_1$, the functional
$$
u\mapsto \left\| u\right\|_{H_{\SD},\lambda}:=\left(\left\|u \right\|^2_{H_{\SD}}-\lambda  \left\| u\right\|_2^2\right)^\frac{1}{2}
$$
defines a norm on $\Hs$ equivalent to $\left\| \cdot\right\|_{H_{\SD}}$, as one has
\begin{equation}\label{normeequiv}
m_\lambda \|u\|_{H_{\SD}}\leq \|u\|_{H_{\SD},\lambda}\leq M_\lambda  \|u\|_{H_{\SD}},
\end{equation}
with $m_\lambda$ and $M_\lambda$ defined in \eqref{mMlambda}
For $u\in\Hs$, we write  $I_{\lambda,\mu}(u)=\Phi_\lambda(u) - \mu\Psi(u)$, where
$$
\Phi_\lambda(u):=\frac{1}{2}\left\| u\right\|_{H_{\SD},\lambda}^2 \quad\text{and}\quad \Psi(u):=\int_\Omega F(x,u) dx.
$$
Having in mind to use Theorem 2.1 in \cite{ric2000a}, we notice at once that $\Psi$ is sequentially weakly continuous on $\Hs$. Indeed, given $\{u_j\}\subset\Hs$ such that $u_j\rightharpoonup u\in\Hs$, by Sobolev embeddings $u_j\to u$ in $L^q(\Omega)$ 
and $|u_j(x)| \leq w(x)$
for all $j\in\N$, for a.e. $x\in\Omega$ and for some $w\in L^q(\Omega)$. By $(f_2')$ we also deduce that
\[
|F(x,u_j)| \leq a_1|u_j(x)| +\frac{a_2}{q}|u_j(x)|^q \leq a_1 w(x) + \frac{a_2}{q}(w(x))^q
\]
for all $j\in\N$ and for a.e. $x\in\Omega$. Then, by dominated convergence,
$$
\lim_{j\to +\infty}\int_\Omega F(x,u_j)dx =\int_\Omega F(x,u) dx,
$$
as claimed.

Now, take $\mu\in(0,\mu_\lambda)$; there will exist $\bar{t}>0$ such that
\begin{equation}\label{maggmulambda}
\mu < \frac{q\bar{t}}{q\kappa_1a_1 \frac{\sqrt{2}}{m_\lambda} +\kappa_q^qa_2\left(\frac{\sqrt{2}}{m_\lambda}\right)^q \bar{t}^{q-1}}.
\end{equation}
If $r>0$, $u\in\Hs$ and $\Phi_\lambda(u)<r$, by \eqref{normeequiv} we deduce that
$$
\left\|u\right\|_{H_{\SD}} <\frac{\sqrt{2r}}{m_\lambda}.
$$
By $(f_2')$ we then obtain
$$
\Psi(u)  \leq a_1\left\| u\right\|_1 +\frac{a_2}{q}\left\| u\right\|_q^q
 \leq \kappa_1a_1 \frac{\sqrt{2r}}{m_\lambda} + \kappa_q^q  \frac{a_2}{q} \left( \frac{\sqrt{2r}}{m_\lambda}\right)^q,
$$
and therefore
$$
\sup_{u\in\Phi_\lambda^{-1}(-\infty,r)}\Psi(u) \leq \kappa_1a_1 \frac{\sqrt{2}}{m_\lambda}r^\frac{1}{2} + \kappa_q^q  \frac{a_2}{q} \left( \frac{\sqrt{2}}{m_\lambda}\right)^q r^\frac{q}{2}.
$$
If $\eta_{\lambda}:(0,+\infty)\to [0,+\infty)$ is the function defined by
\begin{equation*}
	\eta_{\lambda}(r):=\inf_{u\in\Phi_{\lambda}^{-1}\left((-\infty,r)\right)}
	\frac{\sup_{v\in\Phi_{\lambda}^{-1}\left( (-\infty,r)\right) }\Psi(v)-\Psi(u)}{r-\Phi_{\lambda}(u)}, \quad \text{for all } r>0,
\end{equation*}
we then get
\begin{align*}
	\eta_\lambda(\bar{t}^2) & = \inf_{u\in\Phi_{\lambda}^{-1}\left((-\infty,\bar{t}^2)\right)}
	\frac{\sup_{v\in\Phi_{\lambda}^{-1}\left( (-\infty,\bar{t}^2)\right) }\Psi(v)-\Psi(u)}{\bar{t}^2-\Phi_{\lambda}(u)}\\
	& \leq \kappa_1a_1 \frac{\sqrt{2}}{m_\lambda}\bar{t}^{-1} + \kappa_q^q  \frac{a_2}{q} \left( \frac{\sqrt{2}}{m_\lambda}\right)^q \bar{t}^{q-2},
\end{align*}
where we used the fact that $0\in \Phi_{\lambda}^{-1}\left((-\infty,\bar{t}^2)\right)$ and $\Phi_\lambda(0)=\Psi(0)=0$. By \eqref{maggmulambda}
we derive at once that
$$
\eta_\lambda(\bar{t}^2)< \frac{1}{\mu},
$$
and therefore, by \cite[Theorem 2.1]{ric2000a}, 
the restriction of $I_{\lambda,\mu}$ to $\Phi_{\lambda}^{-1}\left( (-\infty, \bar{t}^2)\right)$ has a global minimum 
which is a critical point of $I_{\lambda,\mu}$ in the whole $\Hs$ 
This shows that \eqref{problem} has a weak solution $u_\mu$ for any $\lambda<\Lambda_1$ and any $\mu\in (0,\mu_\lambda)$.


To conclude the proof, let us show that for any $\mu\in (0, \mu_\lambda)$, $u_\mu\not\equiv 0$ in $\Omega$. Fix $\bar\mu\in (0, \mu_\lambda)$,
and let $\bar{t}>0$ such that \eqref{maggmulambda} is satisfied with $\mu=\bar{\mu}$. By what seen before, $I_{\lambda,\bar{u}}$ has a local minimum $u_{\bar{\mu}}\in\Phi_\lambda^{-1}(-\infty,\bar{t}^2)$.
By $(f_3')-(i)$, one has
\begin{equation*}
	\lim_{j\rightarrow +\infty}\frac{\essinf_{x\in \Omega_2}F(x, \xi_j)}{\xi_j^2}=+\infty
\end{equation*}
along a suitable sequence $\{\xi_j\}\subset(0,+\infty)$ such that $\xi_j\to 0^+$ as $j\to
+\infty$, i.e.,
\begin{equation*}
\essinf_{x\in \Omega_2}F(x, \xi_j)>M\xi_j^2,
\end{equation*}
for any $M>0$ and sufficiently large $j\in\N$. Now, let $\Omega_3\subset \Omega_2$, $|\Omega_3|>0$, let $v\in\Hs$ verify $0\leq v\leq 1$, $v=1$ in $\Omega_3$, $v=0$ in $\Omega\setminus \Omega_1$, and set $w_j:=\xi_j v$ for any $j\in \N$. By $(f_3')-(ii)$, for every $\varepsilon>0$ there exists $\varrho_\varepsilon>0$ such that
\begin{equation*}
	\essinf_{x\in \Omega_1}\,F(x,t) \geq(\lambda_0-\varepsilon)t^2, \quad \text{for all } 0<t<\rho_\varepsilon.
\end{equation*}
Therefore, for large $j\in\N$, we obtain
\begin{equation}\label{conto1}
\begin{split}
\frac{\Psi(w_j)}{\Phi_{\lambda}(w_j)}  &= \frac{\displaystyle \int_{\Omega_3}F(x, w_j) dx +\int_{\Omega_1\setminus
\Omega_3} F(x, w_j)dx}{\Phi_{\lambda}(w_j)}
 \geq \frac{\displaystyle 2M|\Omega_3| \xi_j^2+ 2 \int_{\Omega_1\setminus \Omega_3} F(x,\xi_j v)dx}{\xi_j^2\|v\|_{H_{\SD},\lambda}^2}\\
& \geq \frac{2M|\Omega_3| \xi_j^2 + 2(\lambda_0-\varepsilon)\xi_j^2\displaystyle\int_{\Omega_1\setminus \Omega_3} v^2 dx}{\xi_j^2\|v\|_{H_{\SD},\lambda}^2} =\frac{2M|\Omega_3| + 2(\lambda_0-\varepsilon)\displaystyle \int_{\Omega_1\setminus \Omega_3} v^2dx}{\|v\|_{H_{\SD},\lambda}^2}.
\end{split}
\end{equation}

Choosing
$$
M>\displaystyle\max\left\lbrace 0, -2\lambda_0\int_{\Omega_1\setminus \Omega_3}v^2dx\right\rbrace |\Omega_3|^{-1}
\quad \text{and} \quad
0<\varepsilon<M|\Omega_3|\left(2\int_{\Omega_1\setminus \Omega_3}v^2dx\right)^{-1} +\lambda_0,
$$
from \eqref{conto1} we obtain
\[
\frac{\Psi(w_j)}{\Phi_{\lambda}(w_j )}
 \geq \frac{2 M|\Omega_3| + 2\lambda_0 \displaystyle\int_{\Omega_1\setminus \Omega_3} v^2 dx - M|\Omega_3| -2\lambda_0 \int_{\Omega_1\setminus \Omega_3}v^2 dx}{\|v\|_{H_{\SD},\lambda}^2}
 = \frac{M|\Omega_3|}{\|v\|_{H_{\SD},\lambda}^2},
\]
still for $j$ large enough. The arbitrariness of $M$ implies that
\begin{equation}\label{wj}
\limsup_{j\to +\infty}\frac{\Psi(w_j)}{\Phi_{\lambda}(w_j)}=+\infty.
\end{equation}
As $\|w_j\|_{H_{\SD},\lambda}\to 0$ for $j\to +\infty$, we get of course that $w_j\in \Phi_\lambda^{-1}(-\infty,\bar{t}^2)$ for large $j\in\N$. 
Moreover, by \eqref{wj},
\begin{equation*}
I_{\lambda,\bar\mu}(w_j)=\Phi_{\lambda}(w_j)-\bar\mu\Psi(w_j)<0,
\end{equation*}
for sufficiently large $j\in\N$ and thus
\begin{equation*}
I_{\lambda,\bar{\mu}}(u_{\bar{\mu}})\leq I_{\lambda,\bar{\mu}}(w_j)<0 =I_{\lambda,\bar{\mu}}(0),
\end{equation*}
i.e., $u_{\bar\mu}\not\equiv 0$ in $\Hs$. The arbitrariness of $\bar \mu$ shows that  $u_\mu\not \equiv 0$ for any $\mu\in (0, \mu_\lambda)$ and this concludes the proof.
\end{proof}

\begin{remark}\label{estimatemulambda}
{\rm By direct computations, it turns out that, according to the range of $q$, $\mu_\lambda$ in \eqref{defmulambda} attains the following values:
$$
\mu_\lambda=
\left\lbrace
\begin{array}{ll}
	+\infty & \text{ if } q\in(1,2),\smallskip\\
	\displaystyle\frac{m_\lambda^2}{\kappa_2^2 a_2} & \text{ if } q=2,\smallskip\\
	\displaystyle\frac{m_\lambda^2}{2(q-1)} \left[ \frac{q}{\kappa_q^q a_2}\left( \frac{q-2}{\kappa_1 a_1}\right)^{q-2} \right]^\frac{1}{q-1} & \text{ if } q\in(2,2^*_s).
\end{array}	
\right.
$$
}	
\end{remark}

\begin{remark}
{\rm If $f(\cdot,0)\neq 0$ in $\Omega$, then the trivial function $u\equiv0$ does not solve \eqref{problem}, so the local minimum $u_\mu$ is certainly non-trivial and assumption $(f_3')$ in Theorem \ref{thmsubcritgrowth} can be dropped.}
\end{remark}

\begin{remark}
{\rm It can be proved that the last conclusion of Theorem \ref{mainresult} holds true also if $\lambda_0=+\infty$ in assumption $(f_3')-(ii)$. In this case, as a model for $f$, we can take $f(x,t)=a(x)|t|^{r-2}t +b(x)t$ for all $(x,t)\in\Omega\times\R$, with $r\in(1,2)$, $a,b\in L^\infty(\Omega)$ and $\essinf_\Omega a>0$.}
\end{remark}

\section*{Acknowledgement}
\noindent G. Molica Bisci is funded by the European Union - NextGenerationEU within the framework of PNRR Mission 4 - Component 2 - Investment 1.1 under the Italian Ministry of University and Research (MUR) program PRIN 2022 - grant number 2022BCFHN2 - Advanced
theoretical aspects in PDEs and their applications - CUP: H53D23001960006.\\
A. Ortega is partially funded by Vicerrectorado de Investigación, Transferencia del Conocimiento y Divulgación Científica of Universidad Nacional de Educación a Distancia under research project Talento Joven UNED 2025, Ref: 2025/00151/001.\\
L. Vilasi is a member of the Gruppo Nazionale per l'Analisi Matematica, la Probabilità e le loro Applicazioni (GNAMPA) of the Istituto Nazionale di Alta Matematica (INdAM). This work is partially funded by the ``INdAM - GNAMPA Project CUP E5324001950001".

\section*{Conflict of interest}
\noindent On behalf of all authors, the corresponding author states that there is no conflict of interest. 


\end{document}